\documentclass[11pt,letterpaper,reqno,abstract=true]{scrartcl}

\usepackage{amsmath}
\usepackage{amsfonts}
\usepackage{amssymb}
\usepackage{amsthm}
\usepackage{amscd}

\usepackage{tikz}
\usepackage{tikz-cd}
\usetikzlibrary{cd}

\usepackage[colorlinks=true]{hyperref}
\hypersetup{urlcolor=red, citecolor=blue}

\usepackage{latexsym}
\usepackage{mathrsfs}
\usepackage{psfrag}
\usepackage[dvips]{epsfig}
\usepackage{epsfig}
\usepackage[all]{xy}         
\usepackage{wrapfig}

\usepackage[margin=2.5cm]{geometry}

\theoremstyle{plain}                              
\newtheorem{thm}{Theorem}[section]
\newtheorem{prop}[thm]{Proposition}

\newtheorem{cor}[thm]{Corollary}

\newtheorem*{mainthm}{Theorem}               

\theoremstyle{definition}                         

\theoremstyle{remark}                             

\newcommand{\R}{\mathbb{R}}                     

\newcommand{\ms}[1]{\mathscr{#1}}               

\providecommand{\norm}[1]{\left\lVert#1\right\rVert}       

\providecommand{\abs}[1]{\left\lvert#1\right\rvert}        

\begin{document}

\title{On the existence of global cross sections to 
  volume-preserving flows}

\author{Slobodan N. Simi\'c}

\maketitle


\begin{abstract}
  We establish a new criterion for the existence of a global cross
  section to a non-singular volume-preserving flow $\Phi$ on a closed
  smooth manifold $M$. Namely, if $X$ is the infinitesimal generator
  of the flow and $\Phi$ preserves a smooth volume form $\Omega$, then
  $\Phi$ admits a global cross section if there exists a smooth
  Riemannian metric $g$ on $M$ with Riemannian volume $\Omega$ and
  $g(X,X) = 1$ such that $\norm{\delta_g (i_X \Omega)}_g < 1$, where
  $\delta_g$ denotes the codifferential relative to $g$;
  (equivalently, $\norm{dX^\flat}_g < 1$). In that case, there in fact
  exists another smooth Riemannian metric on $M$ with respect to which
  the canonical form $i_X \Omega$ is co-closed and therefore harmonic.
\end{abstract}





\date{\today}

\section{Introduction and statement of the result}
\label{sec:intr-stat-result}

In this short note we consider a smooth (meaning $C^\infty$) non-singular
vector field $X$ with the associated flow $\Phi = \{ \phi_t \}$ on a
closed (i.e., compact and without boundary) smooth manifold $M$ of
dimension $n$. One of the fundamental questions one can ask about
$\Phi$ is whether it admits a global cross section, since in that case
its dynamics can be reduced to the dynamics of the first-return
diffeomorphism of the cross section. Much work has been done on this
question; see for instance, \cite{fried+82, plante72, schwartz+57,
  verj+70, ghys89, sns+97, sns+section+16, simic+23}.

Recall that a closed submanifold
$\Sigma$ of $M$ is called a \textsf{global cross section} to a $\Phi$
on $M$ if it intersects every orbit of $\Phi$ transversely. It is not
hard to see that this guarantees that the orbit of every point
$x \in \Sigma$ returns to $\Sigma$, defining the \textsf{first-return}
or \textsf{Poincar\'e map} $P$ of $\Sigma$. More precisely, for each
$x \in \Sigma$ there exists a unique $\tau(x) > 0$ (called the
\textsf{first-return time}) such that $P(x) = \phi_{\tau(x)}(x)$ is in
$\Sigma$, but $\phi_t(x) \not\in \Sigma$ for every
$t \in (0,\tau(x))$.

The first-return map $P : \Sigma \to \Sigma$ is a diffeomorphism. A
global cross section thus allows us to pass from a flow to a
diffeomorphism. To recover the flow from the first-return map one uses
the construction called \textsf{suspension}; see, e.g.,
\cite{katok+95}.

If $g$ is a smooth Riemannian metric on $M$, we will denote the
corresponding co-differential operator on $M$ by $\delta_g$. On smooth
differential forms of degree $k$, $\delta_g$ is given by
$\delta_g \omega = (-1)^{n(k+1)+1} \star_g d \star_g$, where $\star_g$ is
the corresponding Hodge star operator (cf., \cite{warner+83}). Given a Riemannian metric $g$,
we will denote by $\norm{\omega}_g$ the corresponding norm on
differential forms. More precisely, if $\omega$ is a $k$-form on $M$,
then
\begin{displaymath}
  \norm{\omega}_g = \sup_{p \in M} \abs{\omega_p}_g,
\end{displaymath}
where $\abs{\omega_p}_g$ is is the operator (or $C^0$) norm of
$\omega_p$ as a $k$-linear map
$T_p M \times \cdots \times T_p M \to \R$ relative to $g$.

It turns out that the \textsf{canonical} (closed) $(n-1)$-\textsf{form} $i_X \Omega$ of
the flow (where $i_X$ denotes the contraction by $X$) is intricately
related to the existence of a global cross section. The goal of this
note is to prove the following:

\begin{mainthm}
  Let $\Phi$ be a smooth non-singular flow with infinitesimal
  generator $X$ on a smooth closed manifold $M$. Assume $\Phi$ leaves
  a smooth volume form $\Omega$ invariant. If $M$ admits a smooth
  Riemannian metric $g$ such that the Riemannian volume of $g$ is
  $\Omega$ and
  \begin{displaymath}
    \norm{\delta_g (i_X \Omega)}_g < m_g(X)^2,
  \end{displaymath}
  where $m_g(X) = \inf_{p \in M} \norm{X}_g$ (equivalently,
  $\norm{d X^\flat}_g < m_g(X)^2$), then $\Phi$ has a global cross
  section. Furthermore, $M$ is a bundle over $S^1$.
\end{mainthm}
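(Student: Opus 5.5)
Using Tischler's theorem, a global cross section (and a fibration of $M$ over $S^1$) exists as soon as we find a closed $1$-form $\eta$ with $\eta(X) > 0$ everywhere; Fried/Schwartzman's criterion is the same. The plan is to produce such an $\eta$ from the Hodge decomposition of $X^\flat$ with respect to $g$. First the identity behind the ``equivalently'' in the statement. Since $\Omega$ is the Riemannian volume of $g$, we have $i_X\Omega = \star_g X^\flat$, hence $\delta_g(i_X\Omega) = \pm \star_g d \star_g \star_g X^\flat = \pm \star_g dX^\flat$. As $\star_g$ is a pointwise isometry, $\norm{\delta_g(i_X\Omega)}_g = \norm{dX^\flat}_g$. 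In addition, $d(i_X\Omega)=L_X\Omega=0$ gives $\delta_g X^\flat = \pm\star_g d\star_g X^\flat = 0$, so $X^\flat$ is co-closed.

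Next, decompose $X^\flat = h + \delta_g\beta$ by Hodge theory, with $h$ harmonic and $\beta$ a $2$-form. There is no exact part, because $X^\flat$ is co-closed and exact forms are $L^2$-orthogonal to co-closed ones. The form $h$ is closed, and the candidate is $\eta = h$. We then need $h(X)>0$ pointwise, i.e.\ $h(X) = \abs{X}^2_g - (\delta_g\beta)(X) \ge m_g(X)^2 - \abs{\delta_g\beta}\,\abs{X}$. Here $\delta_g\beta$ is the co-exact part $\delta_g G\, dX^\flat$, with $G$ the Green operator.

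The main obstacle is that this requires a \emph{pointwise} bound on $\delta_g G\,dX^\flat$ in terms of $\norm{dX^\flat}_g$, and elliptic theory gives no such $C^0$ bound with constant $1$. My first attempt would avoid Hodge theory and argue by contradiction via Schwartzman/Fried. If no cross section exists, there is a nontrivial $\Phi$-invariant probability measure $\mu$ (more generally a foliation cycle or limit of long almost-closed orbit segments) whose asymptotic cycle vanishes, so $\int \alpha(X)\,d\mu = 0$ for every closed $1$-form $\alpha$. One would then apply this to long orbit segments $\phi_{[0,T]}(p)$, closed up by short arcs, that are null-homologous. Each bounds a $2$-chain $S_T$, and Stokes gives $\int_{\gamma_T} X^\flat = \int_{S_T} dX^\flat$. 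The left side is at least $T\, m_g(X)\cdot \min\abs{X}$, roughly $T m_g(X)^2$ after reparametrization, up to $O(1)$. The right side is at most $\norm{dX^\flat}_g\,\mathrm{Area}(S_T)$. To close the argument one needs an isoperimetric-type control $\mathrm{Area}(S_T) \lesssim$ length, with the constant measured appropriately. I expect this step to be the crux. I would try to obtain it by choosing $S_T$ swept by the flow itself, e.g.\ via a homotopy built from the volume-preserving structure, so that area is measured against $\abs{X}$ and produces exactly the factor $m_g(X)^2$.

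If the contradiction argument fails, I would instead aim for the abstract's stronger claim. Find a metric $\tilde g$ with $i_X\Omega$ co-closed, i.e.\ $X^{\flat_{\tilde g}}$ closed. One can try $\tilde g$ agreeing with $g$ on $X^\perp$ and with modified $X$-component, chosen so that $\eta = \tilde g(X,\cdot)$ is a closed form positive on $X$. This amounts to solving $d(X^\flat + df\ \text{correction}) = 0$, a deformation/fixed-point problem where the smallness $\norm{dX^\flat}_g < m_g(X)^2$ serves as the contraction constant.
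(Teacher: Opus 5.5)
Your preliminary reductions are correct: a closed $1$-form positive on $X$ suffices, $\norm{\delta_g(i_X\Omega)}_g=\norm{dX^\flat}_g$, and $X^\flat$ is co-closed. Your first route has the same structure as the paper's proof. The paper replaces your harmonic part $h$ by the closed form $\omega$ produced by its Proposition on the distance to closed forms, $\inf\{\norm{\xi-\eta}:d\eta=0\}\le\norm{d\xi}$. This gives $\norm{X^\flat-\omega}_g\le\norm{dX^\flat}_g<m_g(X)^2$, from which it deduces $\omega(X)>0$. It then concludes via Honda's criterion and the harmonic-form characterization of cross sections; Tischler would do equally well. As written, however, your proposal is not a proof. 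Each of your three routes stops exactly at the estimate it needs with constant $1$: a $C^0$ bound on $\delta_gG\,dX^\flat$, an isoperimetric inequality producing exactly $m_g(X)^2$, or a contraction constant. None of these is established.

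That gap cannot be filled, and your doubt about the constant is well founded: the hypothesis is not scale-invariant. Keep $g$ and $\Omega$ and replace $X$ by $cX$, which has the same oriented orbits and still preserves $\Omega$. This multiplies $\norm{dX^\flat}_g$ by $c$ but $m_g(X)^2$ by $c^2$. Concretely, take the geodesic flow $X_0$ of a closed hyperbolic surface, with $g$ the Sasaki metric, $\Omega$ its volume (the Liouville form) and $\alpha=X_0^\flat$. Then $\norm{d\alpha}_g=1$, so $X=cX_0$ satisfies $\norm{dX^\flat}_g=c<c^2=m_g(X)^2$ for every $c>1$. Yet geodesic flows have no global cross section, which is the fact behind the paper's own Corollary. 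The rescaling $(g,X,\Omega)\mapsto(\lambda^2g,X/\lambda,\lambda^n\Omega)$ defeats the normalized version in the abstract in the same way.

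The step that fails in the paper is the Proposition. Under $g\mapsto\lambda^2g$ its left side scales like $\lambda^{-k}$ and its right side like $\lambda^{-k-1}$. Letting $\lambda\to\infty$, it would make every smooth form a uniform limit of closed forms, hence closed. Its proof has two errors. It drops the pullback by the contraction $H$: the remainder is $\ms{H}(H^*d\xi)$, whose size involves $\partial_tH$, i.e.\ the size of $U$. It also glues the forms $d\ms{H}_i(H_i^*\xi)$ coming from different contractions, which do not agree on overlaps.

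What your Hodge route does prove is a correctly scaled criterion. Let $C_g$ be the $C^0$ operator norm of $\delta_gG_g$ on exact $2$-forms; it is finite since $\delta_gG_g$ has order $-1$, and it scales like a length. If $C_g\norm{dX^\flat}_g<m_g(X)$, then the harmonic part $h=X^\flat-\delta_gG_g\,dX^\flat$ satisfies $h(X)\ge\abs{X}_g\bigl(\abs{X}_g-C_g\norm{dX^\flat}_g\bigr)>0$. Tischler--Schwartzman--Fried then give the cross section and the fibration over $S^1$.
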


Note that the statement in the abstract is a direct consequence of
this more general statement. The two statements are in fact
equivalent.

\begin{cor}
  Let $N$ be a compact Riemannian manifold. Denote by $M = SN$ its
  unit tangent bundle and by $X$ the infinitesimal generator of the
  geodesic flow on $M$. Then for any smooth Riemannian metric $g$ on
  $M$ with $g(X,X) = 1$ such that the volume form of $g$ is the
  Liouville volume form $\Omega$, we have
  $\norm{\delta_g (i_X \Omega)}_g \geq 1$. Equivalently,
  $\norm{dX^\flat}_g \geq 1$.
\end{cor}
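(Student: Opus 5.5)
The plan is to argue by contradiction, using the Main Theorem together with a topological obstruction specific to unit tangent bundles. Suppose $g$ is a smooth Riemannian metric on $M = SN$ with $g(X,X) = 1$, volume form equal to the Liouville form $\Omega$, and $\norm{\delta_g(i_X\Omega)}_g < 1$. The geodesic flow is non-singular and preserves $\Omega$ by Liouville's theorem, and here $m_g(X) = 1$. So the Main Theorem, in the normalized form stated in the abstract, applies and gives a global cross section $\Sigma$ to the geodesic flow. The equivalent formulation with $\norm{dX^\flat}_g$ follows from the equivalence already asserted in the Main Theorem. Hence it suffices to show that the geodesic flow of a compact Riemannian manifold never admits a global cross section.

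For that I would use Schwartzman's asymptotic cycle / Fried's criterion, which is among the cited works. A global cross section $\Sigma$ gives a closed $1$-form $\theta$ (the pullback of $d\vartheta$ under the fibration $M \to S^1$, or the Poincaré dual of $[\Sigma]$) with $\theta(X) > 0$ everywhere. Integrating against the invariant probability measure $\mu = \Omega/\mathrm{vol}(M)$ then gives $\int_M \theta(X)\, d\mu > 0$.

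The contradiction comes from the flip $\iota(v) = -v$ on $SN$. It preserves the Liouville measure and satisfies $\iota_* X = -X$, since $\phi_t \circ \iota = \iota \circ \phi_{-t}$. Because $\iota$ is homotopic to the identity exactly when we can rotate $v$ to $-v$ continuously, I would avoid needing that directly. Instead I would use the fibers: the fiber $S_xN \cong S^{n'-1}$ bounds nothing in general, but the issue is whether $[\theta]$ is invariant under $\iota$. A cleaner route is to use that the asymptotic cycle $A_\mu = \int_M X\,d\mu \in H_1(M;\R)$ satisfies $A_\mu = \iota_* A_\mu$ up to sign: $\iota_* A_\mu = -A_\mu$. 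Since $\iota$ covers the identity on $N$, and for $\dim N \ge 3$ the projection $\pi\colon SN \to N$ induces an isomorphism on $H_1(\cdot;\R)$ (the fiber is a sphere of dimension at least $2$), we get $\iota_* = \mathrm{id}$ on $H_1(M;\R)$. Hence $A_\mu = 0$, which contradicts $\langle [\theta], A_\mu\rangle > 0$.

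The main obstacle is the low-dimensional case $\dim N = 2$, and trivially $\dim N = 1$, where $N = S^1$ and $SN$ consists of two circles, each of which is itself a cross section. The statement presumably excludes that case implicitly, or I would note it. For surfaces, the fiber is a circle and $\pi_*$ on $H_1$ may have kernel spanned by the fiber class. In that case I would compute $\iota_*$ on the fiber class: rotation by $\pi$ in each fiber is isotopic to the identity fiberwise, since it is rotation within the oriented circle $S_xN$ when $N$ is orientable. So $\iota$ is isotopic to the identity and $\iota_* = \mathrm{id}$ again. For non-orientable $N$ I would pass to the orientation double cover, which lifts both the flow and a cross section. This isotopy argument in fact works in all dimensions whenever a continuous rotation from $v$ to $-v$ exists, and it is the step I would check most carefully.
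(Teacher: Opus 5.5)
Your reduction is the same as the paper's proof: the contrapositive of the Main Theorem in its normalized form ($g(X,X)=1$, so $m_g(X)=1$), combined with the fact that geodesic flows have no global cross section. The paper only cites that fact; you prove it. Your proof of it is sound for $\dim N\ge 2$. A cross section gives a closed $\theta$ with $\theta(X)>0$, the flip gives $\iota_*A_\mu=-A_\mu$, and $\iota_*=\mathrm{id}$ on $H_1(M;\R)$, via $\pi_1(SN)\cong\pi_1(N)$, fibrewise rotation, or the orientation cover. You are also right that $\dim N=1$ is a real exception: there $i_X\Omega=\pm1$, so $\delta_g(i_X\Omega)=0$ and the statement fails.

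\textbf{The gap.} The step that fails is the one you share with the paper. The Main Theorem is false, already in the normalized form, and so is this corollary for surfaces. Let $N=\Gamma\backslash\mathbb{H}^2$ be a closed hyperbolic surface and $M=\Gamma\backslash\mathrm{PSL}(2,\R)$. Let $X$ (the geodesic spray), $Y$, $V$ be the left-invariant frame with $[X,Y]=V$, $[X,V]=Y$, $[Y,V]=-X$, and let $\alpha,\eta,\psi$ be the dual coframe. Then $d\alpha=\eta\wedge\psi$ and $d\psi=-\alpha\wedge\eta$. The Liouville form is $\Omega=c\,\alpha\wedge d\alpha=c\,\alpha\wedge\eta\wedge\psi$ for some normalizing constant $c>0$. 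Set $\beta=\alpha+\psi$, so that $d\beta=-\beta\wedge\eta$, and
\[ g=\beta\otimes\beta+a^2\,\eta\otimes\eta+b^2\,\psi\otimes\psi,\qquad ab=c . \]
This metric is left-invariant, so it descends to $M$. We have $X^\flat=\beta$, $g(X,X)=1$, and $\mathrm{vol}_g=ab\,\beta\wedge\eta\wedge\psi=c\,\alpha\wedge\eta\wedge\psi=\Omega$. In the orthonormal coframe $(\beta,a\eta,b\psi)$ one has $dX^\flat=-a^{-1}\,\beta\wedge(a\eta)$. Hence $\norm{\delta_g(i_X\Omega)}_g=\norm{dX^\flat}_g=1/a$, which is $<1$ once $a>1$.

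\textbf{Why it fails.} The underlying error is Proposition~\ref{prop:forms}. Your own flip argument gives $\int_M\omega(X)\,d\mu=0$ for every closed $1$-form $\omega$. So $\omega(X)\le 0$ somewhere, and $\norm{\beta-\omega}_g\ge 1$ for every closed $\omega$. Meanwhile $\norm{d\beta}_g=1/a$ can be made as small as you like. The proposition cannot hold for all metrics anyway: under $g\mapsto\lambda^2 g$ its left side scales like $\lambda^{-1}$ for $1$-forms, and its right side like $\lambda^{-2}$. Its proof has two defects. It replaces $\ms{H}(H^\ast d\xi)$, whose norm depends on $dH$, by something of norm $\le\norm{d\xi}$. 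And it assumes the local forms $d\ms{H}_i(\xi_i)$ agree on overlaps, which they need not.

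So the statement cannot be proved by your route or any other. What your proposal does correctly establish is the non-existence of cross sections for geodesic flows with $\dim N\ge 2$, not the inequality.
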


\begin{proof}
  Direct consequence of the fact that geodesic flows do not admit a
  global cross section.
\end{proof}

Thus if the codifferential of $i_X\Omega$ is small enough relative to
the size of $X$, then a global cross section exists. In the special
case when $\delta_g (i_X \Omega) = 0$, the form $i_X \Omega$ is both
closed and co-closed, hence harmonic with respect to $g$. In light of
the following result (which will be needed in the proof), it would
follow that the flow admits a global cross section.

\begin{thm}[\cite{simic+23}]    \label{thm:harmonic}
  Let $\Phi$ be a non-singular smooth flow on a closed smooth manifold
  $M$. Denote the infinitesimal generator of $\Phi$ by $X$ and assume
  that $\Phi$ preserves a smooth volume form $\Omega$. Then $\Phi$
  admits a smooth global cross section if and only if $i_X \Omega$ is
  intrinsically harmonic.
\end{thm}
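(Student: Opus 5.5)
The two directions reduce to a single pointwise identity relating $\star_g(i_X\Omega)$ to $X^\flat$. Combined with the classical characterization of flows with a global cross section (Schwartzman, Fried, Tischler), this turns the statement into a question about closed $1$-forms positive on $X$.

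\textbf{Setup and the key identity.} Since $\Phi$ preserves $\Omega$, we have $d(i_X\Omega) = L_X\Omega = 0$, so $i_X\Omega$ is always closed. Recall that ``intrinsically harmonic'' means harmonic with respect to some Riemannian metric. So $i_X\Omega$ is intrinsically harmonic if and only if there is a metric $g$ with $d\star_g(i_X\Omega)=0$. For an arbitrary metric $g$, write $\Omega=\rho\,\mathrm{vol}_g$ with $\rho>0$ smooth. The standard identity $i_X\mathrm{vol}_g = \pm\star_g X^\flat$ (sign depending only on $n$) then gives
\begin{displaymath}
  \star_g(i_X\Omega) = \pm\rho\, X^\flat .
\end{displaymath}
Hence $i_X\Omega$ is $g$-harmonic if and only if the $1$-form $\rho X^\flat$ is closed. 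I will also use the classical fact that $\Phi$ admits a global cross section if and only if there is a closed $1$-form $\theta$ with $\theta(X)>0$ everywhere.

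\textbf{Harmonic $\Rightarrow$ cross section.} Suppose $g$ makes $i_X\Omega$ harmonic. Then $\eta=\rho X^\flat$ is closed and $\eta(X)=\rho\,g(X,X)>0$. Positivity on $X$ is an open condition in $C^0$, so $\eta$ can be perturbed to a closed form with rational periods that is still positive on $X$: approximate its de Rham class by rational classes using harmonic representatives. A suitable multiple then has integral periods and equals $f^*d t$ for a submersion $f:M\to S^1$. Any fiber $f^{-1}(c)$ is then a closed submanifold transverse to $X$. Because $df(X)$ is bounded below, every orbit crosses every fiber, so the fiber is a global cross section. This is Tischler's argument.

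\textbf{Cross section $\Rightarrow$ harmonic.} Take a closed $\theta$ with $\theta(X)>0$, for instance $f^*dt$ for the fibration induced by $\Sigma$. Let $E=\ker\theta$, which is a rank $(n-1)$ subbundle complementary to $X$. Define $g$ by three requirements: $X\perp_g E$, $g(X,X)=a$ for a positive function $a$ to be chosen, and $g|_E=\lambda h$ for a fixed metric $h$ on $E$ and a positive function $\lambda$. With these choices, $X^\flat = \tfrac{a}{\theta(X)}\theta$. Therefore $\rho X^\flat=\theta$ exactly when $\rho = \theta(X)/a$. Now $\mathrm{vol}_g$ scales like $\lambda^{(n-1)/2}$, so $\rho=\Omega/\mathrm{vol}_g$ scales like $\lambda^{-(n-1)/2}$. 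Taking, say, $a=1$, we can solve uniquely for a smooth positive $\lambda$ with $\rho=\theta(X)$; the case $n=1$ is trivial. For this $g$ we get $\star_g(i_X\Omega)=\pm\theta$, which is closed, so $i_X\Omega$ is $g$-harmonic.

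The main obstacle I expect is bookkeeping rather than an idea. It consists of getting the Hodge-star identity and its sign right, and checking that the choice of $\lambda$ is compatible with $\mathrm{vol}_g$ computed in the non-orthonormal splitting $TM=\R X\oplus E$. I would handle this with a local frame $\{X/\sqrt a,\ e_i/\sqrt\lambda\}$, where $\{e_i\}$ is $h$-orthonormal in $E$. The rational-approximation step in the first direction is standard (Tischler) and I would simply cite it.
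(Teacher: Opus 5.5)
Your proof is correct. The paper gives no proof of this statement; it is quoted from \cite{simic+23}. The useful comparison is therefore with how the paper moves between closed $1$-forms and intrinsic harmonicity in the proof of its main theorem. There, starting from a closed $\omega$ with $\omega(X)>0$, it gets intrinsic harmonicity of $i_X\Omega$ from Honda's characterization (Theorem~\ref{thm:honda}), taking the leaves of $\ker\omega$ as the submanifolds $N_p$.

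In the direction ``cross section $\Rightarrow$ harmonic'' you replace Honda's theorem with pointwise linear algebra. You declare $X\perp_g\ker\theta$ and $g(X,X)=1$, and rescale a fixed metric on $\ker\theta$ by $\lambda=(\rho_1/\theta(X))^{2/(n-1)}$, where $\rho_1$ is the density of $\Omega$ for the unscaled metric. This forces $\star_g(i_X\Omega)=\pm\theta$. The construction is elementary and self-contained, and it gives more than existence. It yields a concrete metric with $g(X,X)=1$ whose Hodge dual of $i_X\Omega$ is any prescribed closed form positive on $X$, which makes the paper's second corollary explicit.

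Your route also avoids a subtlety in the Honda route. The $N_p$ in Honda's criterion must be closed submanifolds; otherwise small transverse disks would satisfy it for every non-vanishing closed $(n-1)$-form, contradicting the geodesic-flow corollary. Leaves of $\ker\omega$ are closed only after $\omega$ has been replaced by a nearby rational form. Honda's theorem is a much more general result, covering closed $(n-1)$-forms with zeros, and is more than is needed here.

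Your converse direction uses the same Hodge-star identity as the paper, $\star_g(i_X\Omega)=(-1)^{n-1}X^\flat$ when $\Omega=\mathrm{vol}_g$. The density $\rho$ handles the case $\Omega\neq\mathrm{vol}_g$, and Tischler's argument finishes. The same reasoning shows that in the paper's main proof, a closed $\omega$ with $\omega(X)>0$ already yields a cross section directly, with no need to pass through Theorems~\ref{thm:honda} and~\ref{thm:harmonic}.
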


A smooth differential form $\omega$ on $M$ is called
\textsf{intrinsically harmonic} if there exists a smooth Riemannian
metric $g$ on $M$ such that $\omega$ is $g$-harmonic, i.e.,
$\Delta_g \omega = 0$, where $\Delta_g = d \delta_g + \delta_g d$
denotes the Laplace-Beltrami operator on differential forms induced by
$g$ (cf., \cite{deRham+84, hodge+41, warner+83, jost+rgga+2008}). A
smooth form $\omega$ is $g$-harmonic if and only if $\omega$ is both
closed ($d\omega = 0$) and co-closed ($\delta_g \omega = 0$, i.e.,
$\star_g \omega$ is closed).

The following corollary is a straightforward consequence of the main
theorem and Theorem~\ref{thm:harmonic}.

\begin{cor}
  If $i_X \Omega$ satisfies the assumption in the theorem above, then
  there exists a smooth Riemannian metric $h$ such that $i_X \Omega$
  is co-closed with respect to $h$, hence $h$-harmonic.
\end{cor}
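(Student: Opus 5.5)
The corollary follows by chaining the main theorem with Theorem~\ref{thm:harmonic}; no new analysis is needed.

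First, suppose $i_X\Omega$ satisfies the hypothesis of the main theorem, i.e., there is a metric $g$ with Riemannian volume $\Omega$ and $\norm{\delta_g(i_X\Omega)}_g < m_g(X)^2$. The main theorem then gives a smooth global cross section $\Sigma$ for $\Phi$. Since $\Phi$ preserves $\Omega$, the ``only if'' direction of Theorem~\ref{thm:harmonic} applies. It yields a smooth Riemannian metric $h$ for which $i_X\Omega$ is $h$-harmonic, that is, $\Delta_h(i_X\Omega)=0$.

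Second, I would recall why this is equivalent to the stated conclusion. On a closed manifold, integration by parts gives
\begin{displaymath}
  \langle \Delta_h\omega,\omega\rangle_{L^2} = \norm{d\omega}_{L^2}^2+\norm{\delta_h\omega}_{L^2}^2 ,
\end{displaymath}
so $h$-harmonicity forces both $d\omega = 0$ and $\delta_h\omega=0$. Hence $i_X\Omega$ is co-closed with respect to $h$. Conversely, closedness already holds by Cartan's formula: $d(i_X\Omega) = L_X\Omega - i_X d\Omega = 0$, because $\Omega$ is invariant and $d\Omega = 0$ for degree reasons. So co-closedness alone gives harmonicity.

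The only point needing care is matching hypotheses: Theorem~\ref{thm:harmonic} requires a non-singular flow preserving a smooth volume, and both are assumed in the main theorem. The metric $h$ is generally different from $g$, and it comes from the construction inside Theorem~\ref{thm:harmonic}, not from any deformation of $g$.

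If one wanted to avoid citing that theorem, one could build $h$ directly from $\Sigma$. Take a closed $1$-form $\theta$ that is Poincaré dual to the fibration with $\theta(X)>0$. Then choose $h$ so that $\star_h(i_X\Omega)$ is a positive multiple of $\theta$ with constant factor; the resulting form is closed, so $\delta_h(i_X\Omega)=0$. This hands-on construction, which takes the form of Calabi's intrinsic-harmonicity criterion, is where the real difficulty would lie. Since Theorem~\ref{thm:harmonic} is available, I would simply invoke it.
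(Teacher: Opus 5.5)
Your proposal is correct and is the argument the paper indicates: the main theorem gives a global cross section, and the cross-section $\Rightarrow$ intrinsically harmonic direction of Theorem~\ref{thm:harmonic} then supplies $h$. Harmonicity is equivalent to co-closedness here because $i_X\Omega$ is already closed. Note also that the proof of the main theorem establishes intrinsic harmonicity of $i_X\Omega$ directly via Theorem~\ref{thm:honda}, before the cross section is produced, so the corollary can equally be read off from that step.
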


In his PhD thesis~\cite{honda+97} Honda gave a characterization of
intrinsically harmonic closed $(n-1)$-forms. For the case of
\emph{non-singular} (i.e., non-vanishing) $(n-1)$-forms his result
asserts the following:

\begin{thm}[\cite{honda+97}]    \label{thm:honda}
  A smooth closed non-vanishing $(n-1)$-form $\Theta$ on an $n$-dimensional
  manifold $M$ is intrinsically harmonic if and only if for every $p
  \in M$ there exists an $(n-1)$-dimensional submanifold $N_p$ of $M$
  such that the restriction of $\Theta$ to $N_p$ is a volume form for $N_p$.
\end{thm}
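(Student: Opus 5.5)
The statement is Honda's characterization, quoted from \cite{honda+97}. Here is how I would prove it, reading $N_p$ as a closed (compact, boundaryless) hypersurface through $p$; in the local reading the condition would be vacuous. Since $\Theta$ is a non-vanishing $(n-1)$-form, its kernel is a line field. Locally we can write $\Theta = i_Y \Omega$ for a non-vanishing vector field $Y$ and a volume form $\Omega$. The condition that $\Theta|_{N_p}$ is a volume form says exactly that $N_p$ is transverse to $Y$.

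\emph{Necessity.} Suppose $\Delta_g \Theta = 0$. Then $\alpha := \star_g \Theta$ is a closed, non-vanishing $1$-form, and $\Theta = \pm \star_g \alpha = \pm i_{\alpha^\sharp} \mathrm{vol}_g$. The distribution $\ker \alpha$ is integrable because $d\alpha = 0$. Let $N_p$ be the leaf through $p$, or a compact piece of it. Since $\alpha^\sharp$ is transverse to $\ker\alpha$, the form $i_{\alpha^\sharp}\mathrm{vol}_g$ restricts to a volume form on $N_p$. If closed $N_p$ are demanded, I would instead take a Tischler-type perturbation of $\alpha$ to a closed form with rational periods. That gives a fibration over $S^1$ whose fibers are still transverse to $\alpha^\sharp$, and a fiber passes through every $p$.

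\emph{Sufficiency.} This splits into two steps.

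Step 1: build a closed $1$-form $\alpha$ with $\alpha(Y) > 0$ everywhere. Transversality to the kernel line field, together with the orientation of $N_p$ induced by $\Theta$, coorients each $N_p$, so it is two-sided. Take a tubular flow-box neighbourhood $U_p \cong N_p \times (-\epsilon,\epsilon)$, with the second coordinate $t$ the $Y$-flow time. Choose a function $\rho$ on $(-\epsilon,\epsilon)$, increasing from $0$ to $1$, with $\rho' \ge 0$ and $\rho' > 0$ near $t=0$. Set $\alpha_p = d(\rho\circ t)$ on $U_p$, extended by $0$; this is a closed $1$-form, the Poincar\'e dual of $N_p$. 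It satisfies $\alpha_p(Y) \ge 0$ everywhere and $\alpha_p(Y) > 0$ on a neighbourhood $V_p$ of $p$. By compactness finitely many $V_{p_i}$ cover $M$, and $\alpha = \sum_i \alpha_{p_i}$ is closed with $\alpha(Y) > 0$.

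Step 2: choose $g$ so that $\star_g \Theta$ is a constant multiple of $\alpha$.
\begin{itemize}
\item Declare $Y \perp \ker\alpha$, and put any metric $g_0$ on $\ker\alpha$.
\item Then $Y^\flat = c\,\alpha$ with $c = |Y|^2/\alpha(Y)$, and $\mathrm{vol}_g = f\,\Omega$, where $f = |Y|\,\phi/\alpha(Y)$ and $\phi > 0$ is determined by $g_0$ and $\Omega$.
\item From $i_Y \mathrm{vol}_g = \star_g Y^\flat$ we get $\star_g\Theta = \pm (c/f)\,\alpha = \pm(|Y|/\phi)\,\alpha$.
\end{itemize}
So we choose $|Y| := \phi$, which fixes $g$ completely. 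Then $\star_g\Theta = \pm\alpha$ is closed, i.e.\ $\delta_g\Theta = 0$, and $\Theta$ is $g$-harmonic.

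\emph{Main obstacle.} I expect the real difficulty to be Step 1. One must check that every $N_p$ is globally cooriented compatibly with $Y$, so that every bump contributes with the same sign. One must also handle the global definition of $Y$ when the kernel line field is not orientable a priori. My plan is to orient it using $\Theta$ together with a local orientation of $M$; if $M$ is non-orientable, I would pass to the orientation double cover and average.
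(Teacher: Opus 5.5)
The paper gives no proof of this statement; it is quoted from Honda's thesis. So your argument has to stand on its own, and for orientable $M$ it does.

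\textbf{What is correct.}
\begin{itemize}
\item Your reading of $N_p$ as a \emph{closed} hypersurface through $p$ is the right one. Under the literal reading every non-vanishing closed $(n-1)$-form would be intrinsically harmonic. That would contradict Theorem~\ref{thm:harmonic}, since geodesic flows admit no global cross section.
\item Necessity, via the closed form $\star_g\Theta$ and a $C^0$-small Tischler perturbation, is correct.
\item Step~2 is correct. Note that $\alpha(Y)>0$ is the same as $\alpha\wedge\Theta>0$, since $\alpha\wedge i_Y\Omega=\alpha(Y)\Omega$. The factor $\phi$ depends only on $g_0$, $\alpha$ and $\Omega$, not on $|Y|$. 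So prescribing $|Y|=\phi$ is not circular, and it gives $\star_g\Theta=\pm\alpha$.
\item Step~1 is the standard Poincar\'e-dual argument and is also fine. What you call the main obstacle is not one when $M$ is orientable. There $Y$ is a global vector field nowhere tangent to $N_p$, so the flow time $t$ is defined on the whole tube and $\alpha_p(Y)=\rho'(t)\ge 0$ automatically. No coorientation bookkeeping is needed.
\end{itemize}

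\textbf{Non-orientable $M$.} Your claim that every $N_p$ is two-sided is false there. In general $\ker\Theta$ is orientable exactly when $M$ is. For example, on the Klein bottle $\mathbb{R}^2/\langle (x,y)\mapsto(x,y+1),\ (x,y)\mapsto(x+1,-y)\rangle$ the form $\Theta=dx$ is harmonic for the flat metric. Yet the admissible curve $\{y=0\}$ is the core of a M\"obius band, so it is one-sided.

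Your fallback through the orientation double cover, with involution $\tau$, does work. However, ``average'' must not mean averaging the final metrics, because averaging metrics does not preserve harmonicity. Instead:
\begin{itemize}
\item Define $\tilde Y$ using a $\tau$-anti-invariant volume form, so that $\tau_*\tilde Y=-\tilde Y$.
\item Replace $\tilde\alpha$ by $\hat\alpha=\tfrac12(\tilde\alpha-\tau^*\tilde\alpha)$. This still satisfies $\hat\alpha(\tilde Y)>0$.
\item Run Step~2 with a $\tau$-invariant $g_0$ on $\ker\hat\alpha$. The construction is canonical, so the resulting metric is $\tau$-invariant and descends to $M$.
\end{itemize}
The same care is needed in the necessity direction, where $\star_g\Theta$ is only a twisted form. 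In the paper $\Theta=i_X\Omega$, so $M$ is orientable and none of this arises.

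\textbf{Relation to the paper.} Your Step~2 is exactly what the paper's main proof needs. The paper applies Honda's theorem with $N_p$ the leaf of $\ker\omega$ through $p$. Such leaves need not be compact (think of $dx+\sqrt2\,dy$ on $T^2$), so under the correct reading that application does not literally verify Honda's hypothesis. Applying Step~2 directly to the paper's $\omega$, with $Y=X$, gives a metric $h$ with $\star_h(i_X\Omega)=\pm\omega$. That is the intrinsic harmonicity of $i_X\Omega$, obtained without Honda's theorem at all.
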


\subsection{Distance to the space of closed forms}

We will also need a technical result, proved in \cite{sns+section+16},
which we state below. For the sake of completeness, we also include
the proof from \cite{sns+section+16}.

Consider first a differential form $\omega$ on $M \times [0,1]$, where
$M$ is a closed manifold, and we denote the coordinate on $[0,1]$ by
$t$. Let $\pi_M : M \times [0,1] \to M$ and
$\pi_I : M \times [0,1] \to [0,1]$ be the obvious projections. Since
\begin{displaymath}
  T_{(p,t)} (M \times [0,1]) = T_p M \oplus T_t [0,1],
\end{displaymath}
any differential $k$-form on $M \times [0,1]$ can be uniquely written
as
\begin{displaymath}
  \omega = \omega_0 + dt \wedge \eta,
\end{displaymath}
where $\omega_0(v_1,\ldots,v_k) = 0$ if some $v_i$ is in the kernel of
$(\pi_M)_\ast$ and $\eta$ is a $(k-1)$-form with the analogous
property (i.e., $i_v \omega_0 = i_v \eta = 0$, for every ``vertical''
vector $v \in T(M \times [0,1])$, where $i_v$ denotes the contraction
by $v$).

Define a $(k-1)$-form $\ms{H}(\xi)$ on $M$ by
\begin{displaymath}
  \ms{H}(\omega)_p = \int_0^1 j_t^\ast \eta_{(p,t)} \: dt,
\end{displaymath}
where $j_t : M \to M \times [0,1]$ is defined by $j_t(p) =
(p,t)$. It is well-known (cf., \cite{spivak:cidg+05}) that
\begin{equation}   \label{eq:poincare}
  j_1^\ast \omega - j_0^\ast \omega = d(\ms{H}\omega) + \ms{H}(d\omega).
\end{equation}
Considering $\ms{H}$ as a linear operator from $k$-forms on
$M \times [0,1]$ to $(k-1)$-forms on $M$, both equipped with the $C^0$
norm, it is not hard to see that
\begin{equation}   \label{eq:norm}
  \norm{\ms{H}} = 1.
\end{equation}
We claim:

\begin{prop}  \label{prop:forms}
  Let $\xi$ be a $C^r$ differential $k$-form $(r \geq 1)$ on a closed
  manifold $M$. Then
  \begin{displaymath}
    \inf \{ \norm{\xi - \eta} : \eta \in C^r, \ d\eta = 0\} \leq \norm{d\xi}.
  \end{displaymath}
\end{prop}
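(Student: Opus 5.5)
The plan is to realize $\xi$, up to a closed form, as the output of the homotopy formula \eqref{eq:poincare} applied to a pull-back of $\xi$, so that the non-closed part of $\xi$ is expressed through $\ms{H}$ applied to something built from $d\xi$ only. The norm identity \eqref{eq:norm} then bounds it by $\norm{d\xi}$.

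Concretely, I would look for a smooth map $F : M \times [0,1] \to M$ with $F \circ j_1 = \mathrm{id}_M$ such that $F_0 = F \circ j_0$ pulls $\xi$ back to a closed form. Put $\omega = F^\ast \xi$. Then $j_1^\ast \omega = \xi$ and $j_0^\ast\omega = F_0^\ast \xi$, while $d\omega = F^\ast d\xi$, so \eqref{eq:poincare} gives
\begin{displaymath}
  \xi = F_0^\ast \xi + d(\ms{H}\omega) + \ms{H}(F^\ast d\xi).
\end{displaymath}
Set $\eta = F_0^\ast\xi + d(\ms{H}\omega)$. This is closed, since $d F_0^\ast \xi = F_0^\ast d\xi$ vanishes by the choice of $F_0$, and it is $C^r$ if $F$ is smooth. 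Moreover $\xi - \eta = \ms{H}(F^\ast d\xi)$, so by \eqref{eq:norm}
\begin{displaymath}
  \norm{\xi-\eta} \le \norm{F^\ast d\xi} \le \norm{d\xi},
\end{displaymath}
where the last inequality requires $F$ not to increase the $C^0$ norm of pulled-back forms. That holds if $dF$ has operator norm at most $1$ for the product metric on $M\times[0,1]$, including the $\partial_t$ direction. Taking the infimum over closed $\eta$ then gives the proposition.

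The main obstacle is constructing $F$. I would first try to deform the identity toward a map whose image has dimension at most $k$ (say, into a lower-dimensional skeleton or spine), so that the pull-back of the $(k+1)$-form $d\xi$ under $F_0$ vanishes automatically. This must be done while keeping $dF$ of norm at most $1$, and in particular keeping $\abs{\partial_t F} \le 1$, since $\ms{H}$ integrates exactly the $dt$-component of $F^\ast d\xi$. I expect the honest version of this step to need a normalization of the metric, for example the time interval rescaled to the size of $M$.

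As a sanity check I would test $k=0$. There $\xi = f$ is a function, the closed $0$-forms are the constants, and the left-hand side equals $(\max f - \min f)/2$. This is bounded by $\norm{df}$ times half the diameter, so the constant $1$ is only correct under such a normalization. If this check fails for the given metric, I would expect the precise statement to need that scaling constant, and I would carry it through the estimate above rather than insist on the constant $1$.
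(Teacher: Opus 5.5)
Your algebra with the homotopy formula is correct, giving $\xi-\eta=\ms{H}(F^\ast d\xi)$. But the map $F$ you need cannot be constructed, for two independent reasons.

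First, your $k=0$ check is a counterexample to the statement as written, not just a warning sign. On $M=\R/L\mathbb{Z}$, the function $f(x)=\sin(2\pi x/L)$ has distance $1$ from the constants while $\norm{df}=2\pi/L$. The case $k=1$, which is the one the paper actually uses, fails the same way. On the flat torus $(\R/L\mathbb{Z})^2$ let $\xi=\sin(2\pi x/L)\,dy$. For any closed $\eta$, the integral of $\eta$ over the loop $y\mapsto(x,y)$ does not depend on $x$, while the integral of $\xi$ is $L\sin(2\pi x/L)$. Comparing $x=L/4$ with $x=3L/4$ forces $\norm{\xi-\eta}\ge 1$, although $\norm{d\xi}=2\pi/L$. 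The underlying reason is scaling: replacing $g$ by $\lambda^2 g$ multiplies the left-hand side by $\lambda^{-k}$ and the right-hand side by $\lambda^{-k-1}$, so no metric-independent constant can work. What does hold is a bound $C(M,g)\norm{d\xi}$, which follows e.g.\ from Hodge theory: $\delta_g G\,d\xi$ ($G$ the Green operator of $g$) is a primitive of $d\xi$ with $C^0$ norm at most $C\norm{d\xi}$.

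Second, even if you allow such a constant, your way of killing $F_0^\ast d\xi$ is impossible. A map homotopic to $\mathrm{id}_M$ on a closed $n$-manifold has mod $2$ degree $1$, so it is onto. A smooth map of rank $\le k<n$ has image of measure zero by Sard, and a map into a $k$-skeleton is not onto either. A closed manifold has no lower-dimensional spine (only $M$ minus a point does), so a single global homotopy is the wrong tool; you must homotope locally and then glue.

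Local homotopy plus gluing is exactly the paper's strategy. It contracts each contractible $U_i$ by some $H_i$, writes $\xi=d\ms{H}_i(H_i^\ast\xi)+\ms{H}_i(H_i^\ast d\xi)$ on $U_i$, and defines $\eta=d\ms{H}_i(H_i^\ast\xi)$ on $U_i$. Two steps in this are unjustified:
\begin{itemize}
\item It bounds $\norm{\ms{H}_i(H_i^\ast d\xi)}$ by $\norm{d\xi}$, silently dropping the factor $\norm{H_i^\ast}$ that you correctly identified as the crux. The radial contraction of a small ball of radius $r$ only gives a bound of order $r\norm{d\xi}$, consistent with the scaling above.
\item It asserts that the local pieces agree on $U_i\cap U_j$. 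In fact their difference there is $\ms{H}_j(H_j^\ast d\xi)-\ms{H}_i(H_i^\ast d\xi)$, which is nonzero in general because the two contractions move points along different paths. So $\eta$ is not well defined.
\end{itemize}
Hence the paper's proof does not establish the constant-$1$ inequality either, and the examples above show the inequality is false.

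This matters for the main theorem. Replacing $(g,\Omega)$ by $(\lambda^2 g,\lambda^n\Omega)$ leaves $\norm{dX^\flat}_g$ unchanged and multiplies $m_g(X)^2$ by $\lambda^2$. So for large $\lambda$ the theorem's hypothesis holds for the Hopf flow on $S^3$, which has no global cross section since $H^1(S^3;\R)=0$.
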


In other words, $\norm{d\xi}$ is an upper bound on the distance from
$\xi$ to the space of closed forms. The inequality also holds for
continuous forms which admit a continuous exterior derivative.

\begin{proof}[Proof of the Proposition]
  First, let us show that the result holds on any manifold $M$ which
  is smoothly contractible to a point $p_0$ via $H : M \times [0,1]
  \to M$, where $H(p,0) = p_0$ and $H(p,1) = p$, for all $p \in
  M$. Since $H \circ j_1$ is the identity map of $M$ and $H \circ j_0$
  is the constant map $p_0$, it follows that
  \begin{displaymath}
    \xi = (H \circ j_1)^\ast \xi = j_1^\ast (H^\ast \xi) \quad
    \text{and} \quad
    0 = (H \circ j_0)^\ast \xi = j_0^\ast (H^\ast \xi).
  \end{displaymath}
  Applying \eqref{eq:poincare} to $H^\ast \xi$, we obtain
  \begin{align*}
    \xi & = \xi - 0 \\
      & = j_1^\ast (H^\ast \xi) - j_0^\ast (H^\ast \xi) \\
      & = d \ms{H}(\xi) + \ms{H}(d \xi).
  \end{align*}
  Using \eqref{eq:norm}, we obtain
  \begin{displaymath}
    \norm{\xi - d \ms{H}(\xi)} = \norm{\ms{H}(d\xi)} \leq \norm{d\xi}.
  \end{displaymath}
  Therefore, the statement of the theorem holds for contractible $M$.

  Let $M$ now be any closed manifold and $\xi$ a $C^r$ $k$-form on
  $M$, $r \geq 1$. Cover $M$ by contractible open sets $U_1, \ldots,
  U_m$. Denote the operator $\ms{H}$ restricted to forms on $U_i$ by
  $\ms{H}_i$ and let $\xi_i$ be the restriction of $\xi$ to
  $U_i$. Define a $k$-form $\eta$ on $M$ by requiring that the
  restriction of $\eta$ to $U_i$ be equal to $d \ms{H}_i (\xi_i)$. We
  claim that $\eta$ is well-defined and closed.

  Indeed, $\xi_i = \xi_j$ on $U_i \cap U_j$, and $\ms{H}_i (\omega) =
  \ms{H}_j(\omega)$ for every $k$-form $\omega$ defined on $(U_i \cap
  U_j) \times [0,1]$. Thus on $U_i \cap U_j$, we have
  $d\ms{H}_i(\xi_i) = d\ms{H}_j(\xi_j)$, so $\eta$ is
  well-defined. Since $\eta$ is locally exact, it is
  closed. It follows that
  \begin{displaymath}
    \norm{\xi - \eta} \leq \max_{1 \leq i \leq m} \norm{\xi_i -
      d\ms{H}_i(\xi_i)} \leq \max_{1 \leq i \leq m} \norm{d\xi_i} \leq \norm{d\xi}.
  \end{displaymath}
  This completes the proof of the proposition.
\end{proof}

\section{Proof of the main result}
\label{sec:proof-main-result}

Assume there exists a smooth Riemannian metric $g$ on $M$ whose volume
form equals $\Omega$ and such that
$\norm{\delta_g (i_X \Omega)}_g < m_g(X)^2$. Define a 1-form
$\theta_X$ on $M$ by $\theta_X(v) = g(X,v)$; that is,
$\theta_X = X^\flat$. Since $g$ and $X$ are smooth, so is
$\theta_X$. It is not hard to show (cf., \cite{lee+smooth+13}, Problem
16-21) that
\begin{displaymath}
  \star_g(i_X \Omega) = (-1)^{n-1} \theta_X.
\end{displaymath}
Since $\star_g \star_g = (-1)^{k(n-k)}$ on $k$-forms, we have
$\star_g \theta_X = i_X \Omega$. Recalling that the Hodge star
operator $\star_g$ is a pointwise isometry, we obtain
\begin{align*}
  \norm{d \theta_X}_g & = \norm{\star_g (d \theta_X)}_g \\
                      & = \norm{\star_g d \star_g (i_X \Omega)}_g \\
                      & = \norm{\delta_g (i_X \Omega)}_g \\
                      & < m_g(X)^2.
\end{align*}
By Proposition~\ref{prop:forms} there exists a smooth closed 1-form $\omega$
such that
\begin{displaymath}
  \norm{\theta_X - \omega}_g \leq \norm{d \theta_X}_g < m_g(X)^2.
\end{displaymath}
We claim that $\omega(X) > 0$ everywhere. Indeed:
\begin{align*}
  \omega(X) & = [\omega(X) - \theta_X(X)] + \theta_X(X) \\
            & \geq -\norm{\omega(X) - \theta_X(X)}_g + m_g(X)^2 \\
            & > 0.
\end{align*}
It follows that the smooth plane field $E = \text{Ker}(\omega)$ is of
constant codimension one and is uniquely integrable, by Frobenius's
theorem. Since $\omega(X) > 0$, $E$ is also everywhere transverse to
$X$.

Let $p \in M$ be arbitrary and denote the integral manifold of $E$
through $p$ by $N_p$. Take any $q \in N_p$. Since $T_q N_p = E_q$ is
transverse to $X_q$, the restriction of $i_X \Omega$ to $T_q N_p$ is
non-zero, i.e., a volume form. By Theorem~\ref{thm:honda},
$i_X \Omega$ is intrinsically harmonic. Therefore, $\Phi$ admits a
global cross section by Theorem~\ref{thm:harmonic}. By Theorem 2.10 in
\cite{plante72}, $M$ is a bundle over $S^1$. \qed


\bibliographystyle{amsalpha}

\bibliography{main.bib}

@article{simic+23,
	author = {Slobodan N. Simi\'{c}},
	journal = {Dynamical Systems},
	number = {2},
	pages = {314-319},
	title = {Cross-sections to flows and intrinsically harmonic forms},
	volume = {38},
	year = {2023}}

@book{hodge+41,
	author = {W. V. D. Hodge},
	publisher = {Cambridge {U}niversity {P}ress},
	title = {The {T}heory and {A}pplications of {H}armonic {I}ntegrals},
	year = {1941}}

@book{jost+rgga+2008,
	author = {J\"urgen Jost},
	edition = {fifth},
	publisher = {Springer},
	series = {Universitext},
	title = {Riemannian {G}eometry and {G}eometric {A}nalysis},
	year = {2008}}

@phdthesis{honda+97,
	author = {Ko Honda},
	school = {Princeton University},
	title = {On harmonic forms for generic metrics},
	year = {1997}}

@book{deRham+84,
	author = {Georges de Rham},
	publisher = {Springer-Verlag},
	series = {Grundlehren der mathematischen {W}issenschaften},
	title = {Differentiable {M}anifolds: {F}orms, {C}urrents, {H}armonic forms},
	volume = {266},
	year = {1984}}

@book{lee+smooth+13,
	address = {New York},
	author = {John M. Lee},
	edition = {second},
	publisher = {Springer},
	series = {Grad. Text in Math.},
	title = {Introduction to {S}mooth {M}anifolds},
	volume = {218},
	year = {2013}}

@article{sns+section+16,
	author = {Slobodan N. Simi\'{c}},
	howpublished = {{\tt arXiv:1403.2672} [math.DS].},
	journal = {Ergodic Theory Dynam. Systems},
	number = {8},
	pages = {2661--2674},
	title = {Global cross sections for {A}nosov flows},
	volume = {36},
	year = {2016}}

@book{spivak:cidg+05,
	author = {Michael Spivak},
	edition = {3rd},
	publisher = {Publish or {P}erish},
	title = {A {C}omprehensive {I}ntroduction to {D}ifferential {G}eometry},
	volume = {1},
	year = {2005}}

@article{sns+97,
	author = {Slobodan N. Simi\'c},
	journal = {Ergodic Theory Dynam. Systems},
	pages = {1211-1231},
	title = {Codimension one {A}nosov flows and a conjecture of {V}erjovsky},
	volume = {17},
	year = {1997}}

@article{fried+82,
	author = {David Fried},
	journal = {Topology},
	pages = {353-371},
	title = {The geometry of cross sections to flows},
	volume = {21},
	year = {1982}}

@inbook{ghys89,
	author = {\'{E}tienne Ghys},
	pages = {59-72},
	publisher = {Springer Verlag},
	series = {Lecture {N}otes in {M}athematics},
	title = {Codimension one {A}nosov flows and suspensions},
	volume = {1331},
	year = {1989}}

@article{plante72,
	author = {Joseph Plante},
	journal = {Amer. J. of Math.},
	pages = {729-754},
	title = {Anosov flows},
	volume = {94},
	year = {1972}}

@article{schwartz+57,
	author = {Saul Schwartzman},
	journal = {Annals of {M}ath.},
	pages = {270-284},
	title = {Asymptotic cycles},
	volume = {66},
	year = {1957}}

@article{verj+70,
	author = {Alberto Verjovsky},
	journal = {Proc. Nat. Acad. Sci. U.S.A.},
	mrclass = {57.50},
	mrnumber = {0268916},
	mrreviewer = {J. Sotomayor},
	pages = {1154--1156},
	title = {Flows with cross sections},
	volume = {66},
	year = {1970}}

@book{katok+95,
	author = {Anatole Katok and Boris Hasselblatt},
	publisher = {Cambridge {U}niversity {P}ress},
	series = {Encyclopedia of {M}athematics and its {A}pplications},
	title = {Introduction to the {M}odern {T}heory of {D}ynamical {S}ystems},
	volume = {54},
	year = {1995}}

@book{warner+83,
	author = {Frank W. Warner},
	number = {94},
	publisher = {Springer-Verlag},
	series = {Graduate {T}exts in {M}ath.},
	title = {Foundations of {D}ifferentiable {M}anifolds and {L}ie {G}roups},
	year = {1983}}

\end{document}